\tikzset{
  LabelStyle/.style = {minimum width = 2em, 
                        text = red, font = \bfseries },
  VertexStyle/.append style = { inner sep=2pt,
                                font = \Large\bfseries, fill},
  EdgeStyle/.append style = {->, bend left} }
\newtheorem{thm}{Theorem}[section]
\numberwithin{equation}{section} 
\numberwithin{figure}{thm} 
\theoremstyle{plain}
\newtheorem*{thm*}{Theorem}
\theoremstyle{definition}
\theoremstyle{plain}
\newtheorem{thm_A}{Theorem}
\newtheorem*{defn*}{Definition}
\theoremstyle{plain}
\theoremstyle{plain} 
\theoremstyle{plain}
\theoremstyle{definition}
\theoremstyle{remark}
\newtheorem{rem}[thm]{Remark}
\theoremstyle{plain}
\theoremstyle{plain}
\theoremstyle{plain}
\newtheorem{lem}[thm]{Lemma}
\newtheorem*{lem*}{Lemma} 
\theoremstyle{definition}
\newtheorem*{acknowledgment*}{Addentum}
\newtheorem{ques}[thm]{Question}
\theoremstyle{plain}
\newtheorem*{ex*}{Example}
\theoremstyle{plain}
\newtheorem{thmx}{Theorem}
\begin{document}
\pgfdeclarelayer{background}
\pgfsetlayers{background,main}
\title{The space of coset partitions of $F_n$ and  Herzog-Sch\"onheim conjecture}
\author{Fabienne Chouraqui}

\date{}

\maketitle
\begin{abstract}
Let $G$ be a group and $H_1$,...,$H_s$ be subgroups of $G$ of  indices $d_1$,...,$d_s$ respectively. In 1974, M. Herzog and J. Sch\"onheim conjectured that if $\{H_i\alpha_i\}_{i=1}^{i=s}$,  $\alpha_i\in G$, is a coset partition of $G$, then $d_1$,..,$d_s$ cannot be distinct. We consider the  Herzog-Sch\"onheim conjecture for free groups of finite rank. We define $Y$ the space of coset partitions of $F_n$  and show $Y$ is a metric space with interesting properties.  In a previous paper, we gave some sufficient conditions on the coset partition of $F_n$ that ensure the  conjecture is satisfied. Here, we show that each coset partition of $F_n$, which satisfies one of these conditions,  has a neighborhood $U$ in $Y$ such that all the partitions in $U$ satisfy also the conjecture.
\end{abstract}
\maketitle
\section{Introduction}
Let $G$ be a group and $H_1$,...,$H_s$ be subgroups of $G$.  If there exist  $\alpha_i\in G$ such that $G= \bigcup\limits_{i=1}^{i=s}H_i\alpha_i$, and the sets  $H_i\alpha_i$, $1 \leq i \leq s$,  are pairwise disjoint, then  $\{H_i\alpha_i\}_{i=1}^{i=s}$ is \emph{a coset partition of $G$}  (or a \emph{disjoint cover of $G$}). We denote by $d_1$,...,$d_s$ the indices of $H_1$,...,$H_s$ respectively. The coset partition $\{H_i\alpha_i\}_{i=1}^{i=s}$ has  \emph{multiplicity} if $d_i=d_j$ for some $i \neq j$. The  Herzog-Sch\"onheim conjecture is true for the group $G$, if any coset partition of $G$ has multiplicity.

\setlength\parindent{10pt}In 1974, M. Herzog and J. Sch\"onheim conjectured that if $\{H_i\alpha_i\}_{i=1}^{i=s}$,  $\alpha_i\in G$, is a  coset partition of $G$, then $d_1$,..,$d_s$ cannot be distinct. In the 1980's, in a series of papers,  M.A. Berger, A. Felzenbaum and A.S. Fraenkel studied  the Herzog-Sch\"onheim conjecture \cite{berger1, berger2,berger3} and in \cite{berger4} they proved the conjecture is true for the pyramidal groups, a subclass of the finite solvable groups.  Coset partitions of finite groups with additional assumptions on the subgroups of the partition have been extensively studied. We refer to \cite{brodie,tomkinson1, tomkinson2,sun} and also to \cite{sun-site}. In \cite{schnabel}, the authors very recently proved that the conjecture is true for all groups of order less than $1440$. 

	 In \cite{chou}, we   study the   Herzog-Sch\"onheim conjecture in  free groups of finite rank, and develop a new approach based on the machinery of covering spaces. The  fundamental group of the  bouquet with $n$ leaves (or the wedge sum of $n$ circles),  $X$,  is $F_n$, the  free group of finite rank $n$. For any  subgroup $H$ of $F_n$ of finite index $d$, there exists  a $d$-sheeted covering space  $(\tilde{X}_H,p)$  with a fixed basepoint, which is also a combinatorial object.  Indeed, the underlying graph of $\tilde{X}_H$  is a directed labelled graph, with $d$ vertices,  that can be seen as  a finite complete bi-deterministic automaton; fixing the start and the end state at the basepoint, it recognises the set of elements in $H$.  It is   called \emph{the Schreier coset diagram  for $F_n$ relative to the subgroup  $H$} \cite[p.107]{stilwell} or  \emph{the Schreier automaton for $F_n$ relative to the subgroup $H$} \cite[p.102]{sims}. \\

 In $\tilde{X}_H$, the $d$ vertices (or states) correspond to the $d$ right cosets of $H$,  each edge (or transition) $Hg \xrightarrow{a}Hga$, $g \in F_n$, $a$ a generator of $F_n$,  describes the right action of $a$ on  $Hg$. We call  $\tilde{X}_H$,  \emph{the  Schreier graph  of $H$},  where  the $d$  vertices  $\tilde{x}_0, \tilde{x}_1,...,\tilde{x}_{d-1}$ are identified with the corresponding $d$ cosets  of $H$.  \emph{The transition group $T$} of the Schreier automaton for $F_n$ relative to  $H$  describes the action of $F_n$ on the set of the $d$ right cosets of $H$, and is generated by $n$ permutations. The group    $T$ is a subgroup of $S_d$ such that   $T \simeq\,^{F_n}\big/_{N_H}$, where $N_H= \bigcap\limits^{}_{g \in F_n}g^{-1}Hg$ is the normal core of $H$.  \\ Let  $\{H_i\alpha_i\}_{i=1}^{i=s}$ be a coset  partition of $F_n$, $n \geq 2$,  with $H_i<F_n$ of index $d_i>1$, $\alpha_i \in F_n$, $1 \leq i \leq s$. Let $\tilde{X}_{i}$ be the   Schreier  graph  of $H_i$, with transition group $T_i$, $1 \leq i \leq s$.  In \cite{chou}, we give some sufficient conditions on the transition groups of the Schreier  graphs $\tilde{X}_{i}$, $1 \leq i \leq s$, that ensure the coset partition $\{H_i\alpha_i\}_{i=1}^{i=s}$ has multiplicity. We state the following Theorems from \cite{chou} that we need for the paper:
 \begin{thmx}\label{theo0}\cite[Theorem 1]{chou}
 Let $F_n$ be the free group on $n \geq 2$ generators. Let $\{H_i\alpha_i\}_{i=1}^{i=s}$ be a coset  partition of $F_n$ with $H_i<F_n$ of index $d_i$, $\alpha_i \in F_n$, $1 \leq i \leq s$, and $1<d_1 \leq ...\leq d_s$.  Let $\tilde{X}_{i}$ denote the  Schreier  graph of $H_i$, with  transition group  $T_i$, $1 \leq i\leq s$.
     If there exists a $d_s$-cycle in $T_s$, then  the index $d_s$ appears in the partition at least  $p$ times, where $p$ is the smallest prime dividing $d_s$.  
  \end{thmx}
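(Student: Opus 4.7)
\emph{Strategy.} The plan is to exploit the $d_s$-cycle in $T_s$ by lifting it to an element $w \in F_n$ and studying the powers $\{w^k : k \in \mathbb{Z}\}$. These powers distribute among the cosets of the given partition in a highly constrained way, producing an exact cover of $\mathbb{Z}$ by arithmetic progressions whose largest modulus is exactly $d_s$. Standard results on exact covers (going back to Newman and refined in the Berger-Felzenbaum-Fraenkel series cited in the introduction) then force $d_s$ to appear with multiplicity at least $p$, where $p$ is the smallest prime dividing $d_s$.

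\emph{Reduction to arithmetic progressions.} I will first lift the hypothesized $d_s$-cycle from $T_s$ to a word $w \in F_n$, which is possible because $T_s$ is generated by the images of the $n$ free generators. Since $F_n$ is free and the image of $w$ in $T_s$ is nontrivial, $w$ has infinite order, so the powers $w^k$ are pairwise distinct. For each $i$, set $A_i = \{k \in \mathbb{Z} : w^k \in H_i\alpha_i\}$. A short orbit-stabilizer argument applied to the cyclic action of $\langle w \rangle$ on the right cosets of $H_i$ shows that if $m_i$ denotes the smallest positive integer with $w^{m_i} \in H_i$, then $A_i$ is either empty or an arithmetic progression $k_i + m_i\mathbb{Z}$; moreover $m_i$ equals the orbit size of $H_i$, and therefore $m_i \leq d_i \leq d_s$. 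For $i = s$, the hypothesis that $w$ acts as a full $d_s$-cycle on the cosets of $H_s$ forces $m_s = d_s$ and $A_s \neq \emptyset$ with common difference exactly $d_s$.

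\emph{Conclusion via covers of $\mathbb{Z}$.} Because $\{H_i\alpha_i\}_{i=1}^{s}$ partitions $F_n$, the nonempty $A_i$'s partition $\mathbb{Z}$, producing an exact cover of $\mathbb{Z}$ by arithmetic progressions whose largest modulus is $d_s$ (attained by $A_s$, and an upper bound for every other $m_i$). Invoking the classical theorem that the largest modulus in any such exact cover appears at least $p$ times, with $p$ the smallest prime dividing it, I conclude that at least $p$ of the $m_i$'s are equal to $d_s$. Each occurrence $m_i = d_s$ forces $d_i \geq m_i = d_s$, and since $d_s$ is the largest index, $d_i = d_s$. Consequently $d_s$ appears at least $p$ times among $d_1, \ldots, d_s$.

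\emph{Main obstacle.} The conceptual heart is the combinatorial reduction to an exact cover of $\mathbb{Z}$; once this is in place, Step 3 is a black-box invocation of the (non-trivial, but classical) covering-system theorem. The reduction itself is mild once one correctly identifies each modulus $m_i$ as an orbit size bounded by $d_i$ and checks that freeness of $F_n$ ensures $w$ has infinite order, so the $A_i$'s really exhaust $\mathbb{Z}$ rather than only a cyclic quotient.
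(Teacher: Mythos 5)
This theorem is quoted from \cite{chou} and is not reproved in the present paper, so there is no in-paper proof to compare against; judging from the machinery the paper does import from that reference (the orbit lengths $o_{*i}(w)$, the loop lemmas for Schreier graphs, and the covering-system literature in the bibliography), your argument follows essentially the same route as the original. Your reduction --- lifting the $d_s$-cycle to $w\in F_n$, showing each nonempty $A_i=\{k : w^k\in H_i\alpha_i\}$ is an arithmetic progression of modulus $m_i\le d_i\le d_s$ with $m_s=d_s$, and then invoking the classical theorem of Zn\'am/Newman that the largest modulus of a disjoint cover of $\mathbb{Z}$ occurs at least $p$ times --- is correct as written.
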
 
The transition group    $T_s$ is a subgroup of the symmetric group $S_{d_s}$, generated by $n \geq 2$ permutations. Dixon proved that the probability that a random pair of  elements of $S_n$ generate $S_n$ approaches $3/4$ as $n \rightarrow\infty$, and the probability that they generate $A_n$ approaches $1/4$ \cite{dixon}. As  $d_s\rightarrow\infty$, the probability that $T_s$ is the symmetric group $S_{d_s}$ approaches $3/4$. So, asymptotically,  the probability that there exists a $d_s$-cycle in $T_s$ is greater than $3/4$. If  $T_s$ is cyclic,  there exists a $d_s$-cycle in $T_s$, since $d_s$ divides the order of $T_s$. That is, Theorem \ref{theo0} is satisfied with very high probability and the conjecture is ``asymptotically satisfied with probability greater than $3/4$''  for free groups of finite rank. \\

 Theorem \ref{theo1} provides a list of conditions on a coset partition  that ensure multiplicity.  Let $w \in F_n$. We denote by $o_{*i}(w)$ the minimal natural number, $1 \leq o_{*i}(w) \leq d_i$, such that $w^{o_{*i}(w)}$  is  a loop  at the vertex $H_i\alpha_i$ in $\tilde{X}_{i}$. Let  $o_{max}(w)=max\{ o_{*i}(w) 1 \leq i \leq s\}$ and $k=max\{o_{max}(v) \mid v \in F_n\}$,  $k$ is the  maximal length of a cycle in $\bigcup\limits_{i=1}^{i=s}T_i$. Let $p$ denote the smallest prime dividing $k$.  We show there exists  $u \in F_n$ such that  $o_{max}(u)=k$ and $\# \geq 2$, where   $\#=\mid \{1 \leq i\leq s \mid o_{*i}(u)=k\}\mid$. Using this notation, we show the following result, which implies under the assumption  $k > d_1$,  that there is a finite number of  cases  not covered by Theorem \ref{theo1}. 
   \begin{thmx}\label{theo1}\cite[Theorem 2]{chou}
  Let $F_n$ be the free group on $n \geq 2$ generators. Let $\{H_i\alpha_i\}_{i=1}^{i=s}$ be a coset  partition of $F_n$ with $H_i<F_n$ of index $d_i$, $\alpha_i \in F_n$, $1 \leq i \leq s$, and $1<d_1 \leq ...\leq d_s$.  Let $r$ be an integer, $2 \leq r \leq s-1$.
     If $k$, $p$ and $\#$, as defined above, satisfy one of the following conditions:
  \begin{enumerate}[(i)]
  \item  $k>d_{s-2}$.
 \item    $k>d_{s-3}$,  $p\geq 3$.
   \item    $k>d_{s-3}$,  $p=2$, and $\#=2$ or $\# \geq 4$.
   \item  $k>d_{s-r}$ and   $p\geq r$, or  $\#=p$, or   $\# \geq r+1$.  
    \end{enumerate} 
  Then the coset partition $\{H_i\alpha_i\}_{i=1}^{i=s}$ has multiplicity. 
  \end{thmx}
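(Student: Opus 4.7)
My plan is to translate the coset partition into an exact cover of $\mathbb{Z}$ by arithmetic progressions, and then to apply the Davenport--Mirsky--Newman--Rado theorem together with Newman's prime-refinement. Fix $u \in F_n$ realizing $o_{max}(u) = k$ and $\# \ge 2$. For each $i$, set $J_i := \{j \in \mathbb{Z} : u^j \in H_i \alpha_i\}$. Since $u^j \in H_i \alpha_i$ is equivalent to $H_i u^j = H_i \alpha_i$, and the orbit of $H_i \alpha_i$ under the right action of $\langle u \rangle$ on $\tilde{X}_i$ has size $o_{*i}(u)$, each nonempty $J_i$ is an arithmetic progression of common difference $o_{*i}(u)$. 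Because the cosets $H_i \alpha_i$ partition $F_n$, the nonempty $J_i$'s form an exact cover of $\mathbb{Z}$ whose maximum modulus is at most $k$.

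Next I would argue by contradiction: assume $d_1 < d_2 < \cdots < d_s$ are pairwise distinct. Since $o_{*i}(u) \le d_i$, any index $i$ with $o_{*i}(u) = k$ satisfies $d_i \ge k > d_{s-r}$, and so lies in $\{s-r+1, \ldots, s\}$; consequently $\# \le r$. I then dispose of the three sub-conditions of (iv) separately. The sub-condition $\# \ge r+1$ contradicts $\# \le r$ directly. The sub-condition $p \ge r$ combines with Newman's refinement (in an exact AP-cover of $\mathbb{Z}$, the largest modulus appears at least $p$ times, $p$ being its smallest prime factor) to yield $\# \ge p \ge r$, forcing $\# = p = r$. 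The sub-condition $\# = p$ pins $\#$ down exactly. In the latter two cases, the $\#$ APs of modulus $k$ collectively fill exactly $p$ residue classes modulo $k$, and the remaining $k-p$ residue classes must be covered by APs whose moduli do not exceed $d_{s-r} < k$; a second pass of the Mirsky--Newman refinement on this residual sub-cover is then used to produce a contradiction. Conditions (i), (ii), (iii) are the special cases of (iv) with $r \in \{2, 3\}$, so they follow.

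The principal technical obstacle I anticipate lies in the balanced cases $\# = p = r$ and $\# = p$, where the first-level counting inequalities are tight and no immediate contradiction appears. The delicate step is to show that once the $\#$ top APs of modulus $k$ are stripped, no exact covering of the remaining residue classes modulo $k$ can be built from APs of moduli bounded by $d_{s-r}$ while keeping the $d_i$'s strictly increasing. I expect this to require either a second application of Newman's refinement on the sub-cover inside a union of residue classes modulo $k$, or a divisibility argument forcing the residual moduli to divide $k$, which together with the strict inequality $d_{s-r+1} < \cdots < d_s$ yields the desired contradiction.
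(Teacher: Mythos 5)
This theorem is quoted from \cite[Theorem 2]{chou}; the present paper contains no proof of it, so I can only assess your plan on its own terms, and it has two genuine gaps. The first is in the reduction to a cover of $\mathbb{Z}$. The nonempty sets $J_i=\{j\in\mathbb{Z}: u^j\in H_i\alpha_i\}$ do partition $\mathbb{Z}$ into arithmetic progressions, but the common difference of a nonempty $J_i$ is the length of the $u$-cycle through the \emph{basepoint} vertex $H_i$ of $\tilde{X}_i$ (the least $m>0$ with $u^m\in H_i$); this equals $o_{*i}(u)$ precisely because, when $J_i\neq\emptyset$, the vertex $H_i\alpha_i$ lies on that basepoint cycle. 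Nothing in the hypotheses forces $J_i\neq\emptyset$ for the indices realizing $o_{*i}(u)=k$: the element $u$ is chosen so that the cycle of $u$ through the vertex $H_i\alpha_i$ has length $k$ for $\#$ values of $i$, but no power of $u$ need lie in any of those cosets. Consequently the largest modulus of your cover may be strictly less than $k$, and the number of progressions of modulus $k$ may be anything from $0$ to $\#$. Newman's refinement therefore does not yield $\#\geq p$, and the chain $\#\geq p\geq r$ is unsupported. (Contrast with Theorem \ref{theo0}, where the hypothesis is a full $d_s$-cycle, which passes through every vertex of $\tilde{X}_s$ including the basepoint, so there the analogous reduction is automatic.) Note also that your bound $\#\leq r$ is derived without using distinctness of the $d_i$, so in the sub-case $\#\geq r+1$ your ``direct contradiction'' shows the hypothesis is vacuous rather than deriving multiplicity from the distinctness assumption; the implication still holds, but vacuously.

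The second gap is that the substantive cases are left open. You explicitly defer the balanced cases $\#=p=r$ and $\#=p$ to an ``anticipated obstacle,'' yet these are exactly the content of conditions (iii) and (iv) beyond first-level counting, and no argument is actually given. The paper's own use of the theorem (proof of Theorem \ref{theo3}(iii)) indicates that in these cases the multiplicity is obtained in the form $d_{s-i}=d_{s-j}$ for some $0\leq i<j\leq r$, which in \cite{chou} comes from showing that two of the conjugated subgroups $\alpha_i^{-1}H_i\alpha_i$ actually coincide (in the spirit of Theorem \ref{theo4} here), not from a contradiction extracted from a residual AP-cover. Your proposed ``second pass of Mirsky--Newman'' is unlikely to close this: after deleting the $p$ residue classes modulo $k$, the remaining progressions have moduli that need not divide $k$, their intersections with a residue class modulo $k$ are progressions of modulus $\mathrm{lcm}(o_{*i}(u),k)$ rather than $o_{*i}(u)$, and no contradiction with strict monotonicity of the $d_i$ is in sight. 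As written, the proposal establishes only the vacuous sub-case and leaves the core of the theorem unproved.
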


 Inspired by \cite{sikora}, in which the author defines the space of left orders of a left-orderable group and show it is a compact and totally disconnected metric space,  we define $Y$ to be  the space of coset partitions of $F_n$ (under some equivalence relation) and  show $Y$ is a metric space. In our case, the metric defined  induces the discrete topology. 
 \begin{thm_A}\label{theo2}
   Let $F_n$ be the free group on $n \geq 2$ generators. Let $Y$  be  the space of coset partitions of $F_n$ (under some equivalence relation). 
   Then $Y$ is a metric space with a metric $\rho$ and $Y$ is   (topologically) discrete 
   \end{thm_A}
We extend the results from \cite{chou} and  show that for  each coset partition of $F_n$, which satisfies one of the conditions in Theorems \ref{theo0} or \ref{theo1},  there exists a  neighborhood $U$ in $Y$ such that all the coset partitions in $U$ have multiplicity.
  \begin{thm_A}\label{theo3}
   Let $F_n$ be the free group on $n \geq 2$ generators. Let $Y$  be  the space of coset partitions of $F_n$ (under some equivalence relation) with metric $\rho$. Let $P_0=\{H_i\alpha_i\}_{i=1}^{i=s}$ be in $Y$, with  $1<d_1 \leq ...\leq d_s$. 
\begin{enumerate}[(i)]
\item If  $P_0$ satisfies the condition of Theorem \ref{theo0}, then every $P \in Y$ with $\rho(P, P_0)< \frac{1}{2}$ satisfies  the same condition and hence has multiplicity.
\item If  $P_0$ satisfies $(i)$ or $(ii)$ of Theorem \ref{theo1}, with some $2 \leq r \leq s-1$, then every $P \in Y$ with $\rho(P, P_0)< 2^{-(r+1)}$ satisfies  the same condition and hence has multiplicity.
\item If  $P_0$ satisfies $(iii)$ or $(iv)$ of Theorem \ref{theo1}, with some $2 \leq r \leq s-1$, then every $P \in Y$ with $\rho(P, P_0)< 2^{-(r+1)}$ has multiplicity.
\end{enumerate} 
 \end{thm_A}
 
    The paper is organized as follows.  In the first section, we introduce the space of coset partitions of $F_n$, an  action of $F_n$ on it, a metric  and prove Theorem \ref{theo2}.  We also give another proof of \cite[Theorem 3]{chou} using the action defined.   In Section $2$, we prove Theorem \ref{theo3}.     

\section{The space of coset partitions of $F_n$}\label{sec_space_partitions}
\subsection{Action of $F_n$ on the space of its coset partitions}\label{subsec_action_F_n-on-space_partitions}

 Let $F_n$ be the free group on $n \geq 2$ generators.  We define $Y'$ to be  the space of coset partitions of $F_n$ (only with subgroups of finite index). For each subgroup $H$ of $F_n$ of  finite index $d>1$, there exists a partition of $F_n$ by the $d$ cosets of $H$.  Generally, if  $P \in Y'$, then $P=\{H_i\alpha_i\}_{i=1}^{i=s}$,  a coset  partition of $F_n$ with $H_i<F_n$ of index $d_i$, $\alpha_i \in F_n$, $1 \leq i \leq s$, and $1<d_1 \leq ...\leq d_s$.  To get  some intuition on  $Y'$, it is worth recalling that  the subgroup growth of $F_n$ is exponential. There exists a  natural right action of $F_n$ on $Y'$. Indeed, if $w \in F_n$, then $P \cdot w = P'$, with $P'= \{H_i\alpha_i\,w\}_{i=1}^{i=s}$ in $Y'$. 
\begin{lem}
The natural right  action of $F_n$ on $Y'$ is faithful 
\end{lem}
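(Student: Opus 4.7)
The plan is to prove that no element $w \in F_n \setminus \{1\}$ stabilizes every coset partition; given such a $w$, I must produce a single $P \in Y'$ with $P \cdot w \neq P$. One subtlety to bear in mind from the outset is that the uniform partition by cosets of a single subgroup $H$ is stabilized by every $w \in F_n$, since right multiplication merely permutes the full family of cosets of $H$. Hence the distinguishing partition I build must mix cosets of at least two distinct subgroups.

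First I would invoke residual finiteness of $F_n$ to select a finite-index subgroup $H < F_n$ with $w \notin H$, of index $d \geq 2$, and enumerate its cosets as $H, H\alpha_2, \ldots, H\alpha_d$. Since $n \geq 2$ and $H$ is itself a non-trivial finitely generated free group by Nielsen--Schreier, it admits a proper finite-index subgroup $K \lneq H$ with decomposition $H = K \sqcup K\gamma_2 \sqcup \ldots \sqcup K\gamma_m$. I would then take
\[
P \;=\; \{K,\, K\gamma_2,\, \ldots,\, K\gamma_m,\, H\alpha_2,\, \ldots,\, H\alpha_d\} \;\in\; Y'.
\]
A brief preliminary lemma, which is a one-line check using that each $H_i$ contains the identity, is that if $H_1\alpha$ and $H_2\beta$ coincide as subsets of $F_n$, then $H_1 = H_2$: each piece of a partition intrinsically determines its underlying subgroup.

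With that in hand, if $P \cdot w = P$, then the pieces of $P \cdot w$ whose underlying subgroup is $H$, namely $\{H\alpha_2 w, \ldots, H\alpha_d w\}$, must coincide as a set with the $H$-pieces of $P$, namely $\{H\alpha_2, \ldots, H\alpha_d\}$. Right multiplication by $w$ therefore permutes the $d-1$ cosets $\{H\alpha_2, \ldots, H\alpha_d\}$; but it also permutes the full family $\{H, H\alpha_2, \ldots, H\alpha_d\}$ of all $d$ cosets of $H$, so it must fix the remaining coset $H$, forcing $w \in H$ and contradicting the choice of $H$. The main obstacle is conceptual rather than technical: realizing that a uniform coset partition will not detect $w$, and engineering a partition in which the $H$-pieces are strictly fewer than all $d$ cosets of $H$ — refining one of them by a proper finite-index subgroup $K \lneq H$ achieves exactly this.
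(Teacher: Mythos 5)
Your proof is correct, but it takes a genuinely different --- and in fact stronger --- route than the paper's. The paper's proof is essentially one line: $P\cdot w=P$ for every $P\in Y'$ if and only if $w$ lies in every finite-index subgroup of $F_n$, and that intersection is trivial by residual finiteness. This works because the paper reads $P\cdot w=P$ as fixing each indexed coset $H_i\alpha_i$ separately (a convention made explicit in the following lemma, where $P\cdot w^k=P$ is equated with $w^k\in\bigcap_{i}\alpha_i^{-1}H_i\alpha_i$); under that reading the partition of $F_n$ into the cosets of a single finite-index subgroup $H$ with $w\notin H$ is already moved by $w$, and no further construction is needed. You instead interpret $P\cdot w=P$ as equality of unordered partitions, correctly note that under this weaker notion the single-subgroup partition detects nothing (right multiplication merely permutes its pieces), and build a genuine witness by refining the coset $H$ itself into cosets of a proper finite-index subgroup $K\lneq H$; your observation that a coset determines its underlying subgroup then forces the permutation of the cosets of $H$ induced by $w$ to preserve the $(d-1)$-element set $\{H\alpha_2,\dots,H\alpha_d\}$, hence to fix the remaining coset $H$, giving $w\in H$, a contradiction. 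All the ingredients check out: the existence of $K$ via Nielsen--Schreier, the fact that $H_1\alpha=H_2\beta$ forces $H_1=H_2$, and $d\geq 2$ because $w\notin H$. What your argument buys is faithfulness even for partitions regarded purely as sets of cosets (so it survives any reindexing or change of representatives); what the paper's convention buys is brevity, since residual finiteness alone then finishes the proof.
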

\begin{proof}
 Let $w \in F_n$. Then  $P \cdot w = P$ for every $P \in Y'$  if and only if  $w$ belongs to the intersection of all the subgroups of finite index of $F_n$. As $F_n$ is residually finite \cite[p.158]{robinson},  the intersection of all the subgroups of finite index of $F_n$ is trivial, so $w=1$, that is the action is faithful.
\end{proof}
Let $P=\{H_i\alpha_i\}_{i=1}^{i=s}$ in $Y'$ and let $\tilde{X}_i$ be the Schreier graph of $H_i$, $ 1 \leq i \leq s$ (as defined in the introduction). Let $w \in F_n$. We denote by $o_{*i}(w)$ the minimal natural number, $1 \leq o_{*i}(w) \leq d_i$, such that $w^{o_{*i}(w)}$  is  a loop  at the vertex $H_i\alpha_i$ in $\tilde{X}_{i}$ or equivalently $w^{o_{*i}(w)} \in \alpha_i^{-1}H_i\alpha_i$ \cite[Section 4.1]{chou}.
\begin{lem}
Let  $P=\{H_i\alpha_i\}_{i=1}^{i=s}$ in $Y'$. Then $\mid Orb_{F_n}(P) \mid \leq d_1...d_s$, where $ Orb_{F_n}(P)$ denotes the orbit of $P$ under the action of $F_n$.
Furthermore, for $w \in F_n$,  $\mid Orb_{w}(P) \mid =lcm(o_{*1}(w),...,o_{*s}(w))$, where $ Orb_{w}(P)$ denotes the orbit of $P$ under the action of $\langle w \rangle$.
\end{lem}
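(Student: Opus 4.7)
The plan is to compute the stabilizers of $P$ under both actions and then apply the orbit-stabilizer theorem, interpreting the indices combinatorially.

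For the first assertion, I would first identify the stabilizer $\mathrm{Stab}_{F_n}(P)$. Since $P\cdot w=P$ means $H_i\alpha_i w = H_i\alpha_i$ for every $i$, equivalently $\alpha_i w\alpha_i^{-1}\in H_i$, i.e.\ $w\in\alpha_i^{-1}H_i\alpha_i$ for every $i$. Hence
\[
\mathrm{Stab}_{F_n}(P)=\bigcap_{i=1}^{s}\alpha_i^{-1}H_i\alpha_i.
\]
Each conjugate $\alpha_i^{-1}H_i\alpha_i$ has index $d_i$ in $F_n$, and it is a standard fact (Poincaré) that the index of a finite intersection of subgroups of finite index is at most the product of the indices. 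Orbit-stabilizer then yields $|\mathrm{Orb}_{F_n}(P)|=[F_n:\mathrm{Stab}_{F_n}(P)]\leq d_1\cdots d_s$.

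For the second assertion, I would specialize to $\langle w\rangle$. Since $F_n$ is torsion-free, $\langle w\rangle\cong\mathbb{Z}$ (the case $w=1$ being trivial with both sides equal to $1$). The stabilizer of $P$ inside $\langle w\rangle$ is
\[
\mathrm{Stab}_{\langle w\rangle}(P)=\{w^{k}\mid w^{k}\in\alpha_i^{-1}H_i\alpha_i\text{ for every }i\}.
\]
By the definition of $o_{*i}(w)$ recalled just before the statement, $w^{k}\in\alpha_i^{-1}H_i\alpha_i$ (equivalently, $w^{k}$ is a loop at $H_i\alpha_i$ in $\tilde{X}_i$) if and only if $o_{*i}(w)\mid k$; this is because the set of such $k$ is a subgroup of $\mathbb{Z}$, hence generated by its smallest positive element $o_{*i}(w)$. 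Therefore
\[
\{k\in\mathbb{Z}\mid w^{k}\in\mathrm{Stab}_{\langle w\rangle}(P)\}=\bigcap_{i=1}^{s}o_{*i}(w)\mathbb{Z}=\mathrm{lcm}(o_{*1}(w),\dots,o_{*s}(w))\,\mathbb{Z},
\]
so the stabilizer has index $\mathrm{lcm}(o_{*1}(w),\dots,o_{*s}(w))$ in $\langle w\rangle$. Orbit-stabilizer gives the desired equality.

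Neither step should pose a real obstacle; the only thing to be careful about is ensuring the equivalence between the ``loop at $H_i\alpha_i$'' formulation and the membership $w^{k}\in\alpha_i^{-1}H_i\alpha_i$, which is explicitly recorded just above the statement and already cited from \cite[Section 4.1]{chou}. Thus the whole argument reduces to the identification of the two stabilizers together with the routine index bound for intersections and the lcm characterization of intersections of cyclic subgroups of $\mathbb{Z}$.
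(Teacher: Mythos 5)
Your proposal is correct and follows essentially the same route as the paper: the paper obtains the first bound by observing directly that the orbit embeds into the product of the $s$ coset sets (which is exactly the content of the Poincar\'e bound you invoke via orbit--stabilizer), and obtains the second equality by noting that $P\cdot w^k=P$ iff $w^k\in\bigcap_{i=1}^{s}\alpha_i^{-1}H_i\alpha_i$ iff $\mathrm{lcm}(o_{*1}(w),\dots,o_{*s}(w))$ divides $k$, which is your stabilizer computation in $\langle w\rangle$. The only difference is packaging (explicit stabilizers and orbit--stabilizer versus the direct counting/minimality argument), and both rely on the same implicit convention that $P\cdot w=P$ means each coset $H_i\alpha_i$ is fixed individually.
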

\begin{proof}
From the definition of the action of $F_n$ on $P$, $F_n$ permutes between the cosets of $H_1$, between the cosets of $H_2$ and so on. So, $\mid Orb_{F_n}(P) \mid \leq d_1...d_s$. The size of $ Orb_{w}(P)$ is equal to the minimal natural number such that  $P \cdot w^k = P$, that is  $k=lcm(o_{*1}(w),...,o_{*s}(w))$. Indeed,     $P \cdot w^k = P$,   if and only if  $w^k \in \bigcap\limits^{i=s}_{i=1}\alpha_i^{-1}H_i\alpha_i$, that is if and only if $lcm(o_{*1}(w),...,o_{*s}(w))$ divides $k$ \cite[Lemma 4.10]{chou}. 
As $k=lcm(o_{*1}(w),...,o_{*s}(w))$ is minimal such that  $P \cdot w^k = P$,   $\mid Orb_{w}(P) \mid =lcm(o_{*1}(w),...,o_{*s}(w))$.
\end{proof}
In \cite[Theorem 3]{chou},  we give a  condition on a partition $P$ that ensures  the same  subgroup appears at least twice in $P$.
We state a shortened version of the result and give another proof using the action of $F_n$ on $Y'$.
   \begin{thmx}\cite[Theorem 3]{chou}\label{theo4}
Let  $P=\{H_i\alpha_i\}_{i=1}^{i=s}$ in $Y'$. If there exist  $1 \leq j,k \leq s$ such that $\bigcap \limits_{i=1}^{i=s}\alpha_i^{-1}H_i\alpha_i \subsetneqq \bigcap \limits_{i\neq j,k}\alpha_i^{-1}H_i\alpha_i$.  Then  $H_j=H_k$.
  \end{thmx}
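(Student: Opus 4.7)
The strategy is to exploit the right action of $F_n$ on $Y'$ introduced above. Set $N = \bigcap_{i=1}^{s} \alpha_i^{-1} H_i \alpha_i$ and $N' = \bigcap_{i\neq j,k} \alpha_i^{-1} H_i \alpha_i$. Since $(H_i \alpha_i) \cdot w = H_i \alpha_i$ if and only if $w \in \alpha_i^{-1} H_i \alpha_i$, one has $P \cdot w = P$ iff $w \in N$. The hypothesis $N \subsetneq N'$ therefore supplies an element $w \in N' \setminus N$ whose action fixes every coset $H_i \alpha_i$ with $i \neq j,k$, while $P \cdot w \neq P$.

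Next, I would use that $P \cdot w = \{H_i \alpha_i w\}_{i=1}^{s}$ is again a coset partition of $F_n$ and that it coincides with $P$ off $\{j,k\}$, so
\[
H_j \alpha_j w \; \sqcup \; H_k \alpha_k w \;=\; H_j \alpha_j \;\sqcup\; H_k \alpha_k .
\]
A short case analysis, using that two right cosets of the same subgroup are either equal or disjoint, rules out $H_j \alpha_j w = H_j \alpha_j$: such an equality would give $w \in \alpha_j^{-1} H_j \alpha_j$, and then cancelling in the displayed disjoint-union identity would force $H_k \alpha_k w = H_k \alpha_k$, i.e.\ $w \in N$, contradicting our choice of $w$. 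Symmetrically $H_k \alpha_k w \neq H_k \alpha_k$. Combined with the union identity, this leaves only the ``swap'' $H_j \alpha_j w = H_k \alpha_k$ (and $H_k \alpha_k w = H_j \alpha_j$).

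To conclude, I would invoke the elementary fact that if $H_j \beta = H_k \gamma$ as subsets of $F_n$, then $H_j = H_k$: right-multiplying by $\beta^{-1}$ gives $H_j = H_k(\gamma \beta^{-1})$, and since the left side contains the identity, $\gamma \beta^{-1} \in H_k$, whence $H_j = H_k$. Applied to $\beta = \alpha_j w$ and $\gamma = \alpha_k$, this yields the desired equality $H_j = H_k$.

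The main obstacle is the middle step: one must carefully combine the partition property of $P \cdot w$ with the equal-or-disjoint dichotomy for right cosets of a fixed subgroup, and invoke the hypothesis $w \notin N$ at the right moment to exclude the ``no swap'' cases. Once $w \in N' \setminus N$ has been produced by the action argument, the remainder reduces to this combinatorial comparison of two partitions of a common subset of $F_n$.
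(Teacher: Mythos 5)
Your proposal is correct and follows essentially the same route as the paper's proof: extract $w \in N' \setminus N$, act on $P$ by $w$ to reduce to the two-coset identity $H_j\alpha_j w \sqcup H_k\alpha_k w = H_j\alpha_j \sqcup H_k\alpha_k$, deduce the swap $H_j\alpha_j w = H_k\alpha_k$, $H_k\alpha_k w = H_j\alpha_j$ from the equal-or-disjoint dichotomy, and conclude $H_j = H_k$ by translating one coset identity into a statement about subgroups. Your explicit cancellation argument ruling out $H_j\alpha_j w = H_j\alpha_j$ (and symmetrically for $k$) is a small but welcome elaboration of a step the paper merely asserts.
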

\begin{proof}
From the assumption, there exists  $w \in  \bigcap \limits_{i\neq j,k}\alpha_i^{-1}H_i\alpha_i$, $w \notin \bigcap \limits_{i=1}^{i=s}\alpha_i^{-1}H_i\alpha_i$. Then $P \cdot w=\;\{H_i\alpha_iw\}_{i=1}^{i=s}$ gives $F_n=\bigcup\limits_{i \neq j,k} H_i\alpha_i \cup H_j\alpha_jw\cup H_k\alpha_kw$.
So, $H_j\alpha_jw\cup H_k\alpha_kw=H_j\alpha_j\cup H_k\alpha_k$, with $H_j\alpha_jw \neq H_j\alpha_j$ and $H_k\alpha_kw\neq H_k\alpha_k$. As $H_j\alpha_jw \cap H_j\alpha_j =\emptyset$  and $H_k\alpha_kw\cap H_k\alpha_k =\emptyset$, $H_j\alpha_jw \subseteq H_k\alpha_k $ and $H_k\alpha_kw \subseteq H_j\alpha_j$. From $H_j\alpha_jw\cup H_k\alpha_kw=H_j\alpha_j\cup H_k\alpha_k$ again, we have $H_j\alpha_jw= H_k\alpha_k$ and $H_j\alpha_j= H_k\alpha_kw$, that is $H_k\alpha_kw\alpha_j^{-1}=H_j$ a subgroup, so $H_k\alpha_kw\alpha_j^{-1}=H_k$, that is $H_k=H_j$. Further, we recover  $o_{*j}(w)=o_{*k}(w)=2$ and $w^2 \in  \bigcap \limits_{i=1}^{i=s}\alpha_i^{-1}H_i\alpha_i$, as in the proof of \cite[Theorem 3]{chou}.
\end{proof} 
We recall that for each subgroup $H$ of index $d$ in $F_n$ (or in any group), there is a transitive action of the group on the set of right cosets of $H$, that is given two cosets $H\alpha$ and  $H\beta$ of $H$, there exists $w$ such that $H\alpha\cdot w=\,H\beta$. So, the following question arises:
\begin{ques}\label{ques}
Let $P=\{H_i\alpha_i\}_{i=1}^{i=s}$ and  $P'=\{H_i\beta_i\}_{i=1}^{i=s}$ in $Y'$. Does  there necessarily exist $w \in F_n$ such that $P'=P\cdot w$ ?
\end{ques}

\subsection{Topology in the space of coset partitions of $F_n$}
We refer to \cite{munkres} for more details.
Let  $Y'$  be  the space of coset partitions of $F_n$. Given $P=\{H_i\alpha_i\}_{i=1}^{i=s}$ in $Y'$, with $d_s \geq ...\geq d_1>1$, we identify $P$ with the $s$-tuple $(H_s,...,H_1)$ and we consider $H_s$ at the first place, $H_{s-1}$ at the second place and so on. Let  $P'\in Y'$, $P'= \{K_i\beta_i\}_{i=1}^{i=t}$. 
We define a function $d: Y' \times Y' \rightarrow\mathbb{R}$ :
\[ d(P,P')= \left\{\begin{array}{ccc}
2^{-k} & & \text{if } k\, \text{is  the first place at which } K_i \neq H_i\\
0 & & \text{if }  t=s;\; \;\;H_i=K_i, \,\;\forall 1 \leq i \leq s \\
\end{array} \right.
\]
The  function $d: Y' \times Y' \rightarrow\mathbb{R} \cup \{\infty\}$ is a \emph{semi-metric} if for all $P,P',P'' \in Y'$, $d$  satisfies  $d(P,P')=d(P',P)$ (symmetry) and  $d(P,P'')\, \leq \,d(P,P')\,+d(P',P'')$ 
(triangle inequality). A standard argument shows:

\begin{lem}
The function $d$ is a semi-metric.
\end{lem}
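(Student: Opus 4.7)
The plan is to verify the two defining properties of a semi-metric: symmetry and the triangle inequality. Symmetry is immediate from the definition, since the notion of ``the first place at which $K_i \neq H_i$'' is obviously symmetric in $P$ and $P'$, and the vanishing branch $d(P,P')=0$ is also symmetric in its condition; thus $d(P,P')=d(P',P)$ in all cases.

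For the triangle inequality, I would establish the stronger ultrametric inequality
\[
d(P,P'')\;\leq\;\max\{d(P,P'),\,d(P',P'')\},
\]
from which $d(P,P'')\leq d(P,P')+d(P',P'')$ follows at once. Fix $P,P',P''\in Y'$ and set $d(P,P')=2^{-a}$, $d(P',P'')=2^{-b}$, using the convention $2^{-\infty}=0$ for coincident partitions. Put $m=\min(a,b)$. By definition of $d$, the tuples corresponding to $P$ and $P'$ agree in their first $m-1$ entries, and similarly for $P'$ and $P''$. Transitivity of equality then forces $P$ and $P''$ to agree in their first $m-1$ entries as well, so the first place of disagreement between $P$ and $P''$ (if any) occurs at some index $\geq m$, whence $d(P,P'')\leq 2^{-m}=\max\{d(P,P'),d(P',P'')\}$.

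The only mild subtlety, and the one I would be careful to address explicitly, concerns partitions of different lengths $s\neq t$: the phrase ``first place at which $K_i\neq H_i$'' must be interpreted so that a position at which one tuple has an entry and the other does not also counts as a disagreement. With this reading the statement ``$P$ and $P'$ coincide in their first $m-1$ entries'' forces both tuples to have length at least $m-1$ with matching subgroups in those positions, and this property is patently transitive, so the argument above goes through without change. No separate case analysis is needed, and no further obstacle arises — the lemma really is ``a standard argument'' as claimed, the essential content being that $d$ is in fact an ultrametric on $Y'$.
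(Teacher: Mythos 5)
Your proof is correct and follows essentially the same idea as the paper's: both arguments rest on the observation that the first place of disagreement between $P$ and $P''$ cannot occur before $\min\{a,b\}$. Your version is in fact slightly cleaner, since you prove the stronger ultrametric inequality $d(P,P'')\leq\max\{d(P,P'),d(P',P'')\}$ and state the correct conclusion $m\geq\min\{a,b\}$ (the paper's case analysis asserts the equality $m=\min\{k,\ell\}$, which can fail when $k=\ell$, although the triangle inequality still follows), and you explicitly address the different-length subtlety that the paper leaves implicit.
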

\begin{proof}
Let $P,P',P'' \in Y'$. Clearly,  $d(P,P')=d(P',P)$. Assume  $d(P,P')=2^{-k}$,  $d(P',P'')=2^{-\ell}$, and $d(P,P'')=2^{-m}$. If $k>1$ or $\ell>1$, then   $m=min\{k, \ell\}$  and   $d(P,P'')=2^{-(min\{k,\ell\})} \, \leq \,2^{-k}\,+2^{-\ell}$. If $k=\ell=1$, then $m \geq 1$ and $d(P,P'')=2^{-m}<1$.
\end{proof}
A \emph{metric} is a semi-metric with 
the additional requirement that $d(P,P') = 0$  implies $P=P'$.
Identifying points with zero distance in a semi-metric $d$ is an equivalence relation that leads to a  metric $\hat{d}$.    The function $\hat{d}$ is then a metric in  $Y'\big/\equiv$, with $P\equiv P'$ if and only if $d(P,P') = 0$. If the answer to Question \ref{ques} is positive, then 
 $Y'\big/\equiv$ is the same as the quotient of $Y'$ by the action of $F_n$. We denote $Y'\big/\equiv$  by $Y$ and  $\hat{d}$ by  $\rho$.

We denote by $B_r(P_0)=\{P \in Y \mid \rho(P,P_0)<2^{-r} \}$, the open ball of radius $2^{-r}$ centered at $P_0$. A set $U \subset Y$  is open if and only if for every point $P \in U$,  there exists $\epsilon>0$ such that $B_\epsilon(P)\subset U$. A space $Y$ is \emph{totally disconnected} if every two distinct points of $Y$ are contained in two disjoint open sets covering the space.
A point $P$ in a metric space   $Y$ is 
 an \emph{isolated point} of $Y$ if there exists a real number $\epsilon>0$, such that $B_\epsilon(P)=\{P\}$. If all the points in  $Y$ are isolated, then $Y$ is \emph{discrete}. The space $Y$  is  \emph{(topologically) discrete} if $Y$ is  discrete as a topological space, that is the  metric  may be different from the  discrete metric.
\begin{thm}
The metric space $Y$ is (topologically) discrete.
\end{thm}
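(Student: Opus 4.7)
The plan is to prove topological discreteness by exhibiting, around each point of $Y$, an explicit open ball that contains only that point. Given $P_0 = \{H_i\alpha_i\}_{i=1}^{i=s}$ in $Y$ with $1 < d_1 \leq \cdots \leq d_s$, I claim that the open ball of radius $2^{-s}$ centred at $P_0$ satisfies $B_s(P_0) = \{P_0\}$.

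Let $P = \{K_i\beta_i\}_{i=1}^{i=t}$ be an element of $Y$ with $\rho(P, P_0) < 2^{-s}$. By the definition of $\rho$, either $d(P, P_0) = 0$ (so $P \equiv P_0$ and $P = P_0$ in $Y$, and we are done), or $d(P, P_0) = 2^{-k}$ for some integer $k > s$. In the latter case, the tuples associated to $P_0$ and $P$ (listed with the largest-index subgroup in first position) agree on positions $1$ through $s$; equivalently, $K_{t-j+1} = H_{s-j+1}$ for $j = 1, \ldots, s$. In particular $t \geq s$, for otherwise the $P_0$-tuple would have an entry at some position $\leq s$ where the $P$-tuple has none, forcing a disagreement at a position $\leq s$ and contradicting $k > s$.

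To rule out $t > s$, I appeal to the elementary fact that the reciprocals of the indices in a coset partition of $F_n$ sum to $1$ (which follows by refining all cosets in the partition to cosets of the finite-index intersection of the participating subgroups). Applying this to both $P$ and $P_0$ and invoking the matching of the trailing $s$ subgroups,
\[
1 \;=\; \sum_{i=1}^{t} \frac{1}{[F_n : K_i]} \;=\; \sum_{i=1}^{t-s} \frac{1}{[F_n : K_i]} \;+\; \sum_{j=1}^{s} \frac{1}{[F_n : H_j]} \;=\; \sum_{i=1}^{t-s} \frac{1}{[F_n : K_i]} \;+\; 1.
\]
Since every summand $1/[F_n : K_i]$ is strictly positive, this forces $t = s$; but then $K_i = H_i$ for all $i$, so $P \equiv P_0$ and hence $P = P_0$ in $Y$, contradicting the assumption that $d(P, P_0) \neq 0$.

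I expect the main subtlety to be the way the semi-metric compares tuples of different lengths; the step above where the partition identity for the indices eliminates the case $t > s$ is precisely what makes this subtlety harmless. Once each $P_0$ is shown to be isolated in this way, topological discreteness of $Y$ is immediate.
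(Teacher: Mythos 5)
Your proof is correct, and it follows the same overall strategy as the paper --- every point of $Y$ is isolated because a sufficiently small ball around $P_0$ pins down the whole tuple of subgroups --- but the execution differs in one substantive way. The paper simply takes $\epsilon < 2^{-(s+1)}$: then $\rho(P,P_0)<\epsilon$ forces agreement of the two tuples through position $s+1$, which rules out $t \neq s$ on purely formal grounds (a tuple of length $t \neq s$ must first disagree with the $P_0$-tuple at some position $\leq s+1$), so no group theory is needed. You instead take the larger radius $2^{-s}$, for which the case $t>s$ with the first $s$ positions agreeing is not formally excluded (it would give $d(P,P_0)=2^{-(s+1)}<2^{-s}$), and you eliminate it with the identity $\sum_{i=1}^{t} 1/[F_n:K_i]=1$, valid for any coset partition by finite-index subgroups. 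That identity is correctly invoked (refine to cosets of $\bigcap_i K_i$ and count), and your bookkeeping with the reversed indexing $K_{t-j+1}=H_{s-j+1}$ is right. The net effect is that you prove the marginally stronger statement that the ball of radius $2^{-s}$ is already $\{P_0\}$, at the cost of importing a (standard) fact about coset partitions that the paper's slightly smaller radius makes unnecessary.
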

\begin{proof}
We show that all the points in $Y$ are isolated. Let 
$P=\{H_i\alpha_i\}_{i=1}^{i=s}$ in $Y$, with $d_s \geq ...\geq d_1>1$. Then for  $\epsilon < 2^{-(s+1)}$, $B_\epsilon(P)=\{P\}$.
\end{proof}
This implies that  $Y$ is Hausdorff, bounded and totally disconnected,
 facts that could be easily proved directly using $\rho$. A metric space $X$ is \emph{uniformly discrete}, if there exists $\epsilon > 0$ such that for any $x,x' \in X$, $x \neq x'$, $\rho(x,x')>\epsilon$. The space $Y$ is not uniformly discrete.  
 \begin{rem}
 Given an arbitrary group $G$, one can define in the same way the space $Y$, the metric $\rho$ and obtain the same topological properties. The action of $G$ on $Y$ can also be defined in the same way, but it is not necessarily faithful anymore.
 \end{rem}

\section{Extension of Theorem \ref{theo1}: Proof of Theorem \ref{theo3}}\label{sec_main_result}
 
\begin{proof}[Proof of Theorem \ref{theo3}]

Let $P_0=\{H_i\alpha_i\}_{i=1}^{i=s}$,  with  $1<d_{H_1} \leq ...\leq d_{H_s}$. Let $P \in Y$, $P=\{K_i\beta_i\}_{i=1}^{i=t}$, with  $1<d_{K_1} \leq ...\leq d_{K_t}$.\\
$(i)$  If $\rho(P, P_0)< \frac{1}{2}$, then $K_t=H_s$. So, if  there exists a $d_s$-cycle in $T_{H_s}$, the index $d_s$ appears in $P_0$ and in $P$ at least $p$ times, where $p$ is the least prime dividing $d_s$. Note that   this implies necessarily  $\rho(P, P_0) \leq  2^{-p-1}$.\\
$(ii)$, $(iii)$ If $\rho(P, P_0)< 2^{-(r+1)}$, $2 \leq r \leq s-1$, then $K_t=H_s$,  $K_{t-1}=H_{s-1}$,...,  $K_{t-r}=H_{s-r}$. 
If $k$,  the maximal length of a cycle in $\bigcup\limits_{i=1}^{i=s}T_{H_i}$,  satisfies   $k>d_{H_{s-r}}$, then  $k>d_{K_{t-r}}$ also. Furthermore, $k$ occurs in  $\bigcup\limits_{i=s-r}^{i=s}T_{H_i}$ and also in $\bigcup\limits_{j=t-r}^{j=t}T_{K_j}$, since  $\bigcup\limits_{i=s-r}^{i=s}T_{H_i}\,=\,
\bigcup\limits_{j=t-r}^{j=t}T_{K_j}$. 
If $P_0$ satisfies condition $(i)$ or $(ii)$ of Theorem \ref{theo1}, then $P$   satisfies  the same condition and hence has multiplicity. 

 If $P_0$ satisfies condition $(iii)$ or $(iv)$ of Theorem \ref{theo1}, then $P_0$  has multiplicity, with $d_{H_{s-i}}=d_{H_{s-j}}$ for some $i \neq j$, $0 \leq i,j\leq r$. So, $d_{K_{t-i}}=d_{K_{t-j}}$ also, that is $P$ has multiplicity.
\end{proof}

To conclude, we ask the  following natural question: does there exist a metric on the space of coset partitions $Y'$ that induces a non-discrete topology and yet can give rise to a result of the form of Theorem \ref{theo3} ?

\bigskip\bigskip\noindent
{ Fabienne Chouraqui,}

\smallskip\noindent
University of Haifa at Oranim, Israel.
\smallskip\noindent
E-mail: {\tt fabienne.chouraqui@gmail.com} {\tt fchoura@sci.haifa.ac.il}

\end{document}